\newtheorem{theorem}{Theorem}
\theoremstyle{plain}
\newtheorem{corollary}{Corollary}
\newtheorem{definition}{Definition}
\newtheorem{lemma}{Lemma}
\newtheorem{proposition}{Proposition}
\numberwithin{equation}{section}
\begin{document}
\title[Simpson type inequalities]{Simpson type inequalities for $Q-$ Class
functions }
\author{M.Emin \"{O}zdemir$^{\blacklozenge }$}
\address{$^{\blacklozenge }$Atat\"{u}rk University, K.K. Education Faculty,
Department of Mathematics, Erzurum 25240, Turkey}
\email{emos@atauni.edu.tr}
\author{Alper Ekinci$^{\clubsuit }$}
\email{alperekinci@hotmail.com}
\author{Mustafa G\"{u}rb\"{u}z$^{\clubsuit }$}
\address{$^{\spadesuit }$A\u{g}r\i\ \.{I}brahim \c{C}e\c{c}en University,
Faculty of Education, Department of Mathematics, A\u{g}r\i\ 04100, Turkey}
\email{mgurbuz@agri.edu.tr}
\author{Ahmet Ocak Akdemir$^{\clubsuit }$}
\address{$^{\clubsuit }$A\u{g}r\i\ \.{I}brahim \c{C}e\c{c}en University,
Faculty of Science and Arts, Department of Mathematics, A\u{g}r\i\ 04100,
Turkey}
\email{ahmetakdemir@agri.edu.tr}
\keywords{Simpson inequality, $Q-$Class functions, power-mean inequality,
Simpson's formulae.}

\begin{abstract}
In this paper, we obtain some Simpson type inequalities for functions whose
second derivatives' absolute value or q-th power of them are $Q-$class
functions. Also we give applications to numerical integration.
\end{abstract}

\maketitle

\section{introduction}

Suppose $f:[a,b]\rightarrow 
\mathbb{R}
$ is a four times continuously differentiable mapping on $(a,b)$ and $%
\left\Vert f^{(4)}\right\Vert _{\infty }=\sup \left\vert
f^{(4)}(x)\right\vert <\infty .$ The following inequality%
\begin{equation*}
\left\vert \frac{1}{3}\left[ \frac{f(a)+f(b)}{2}+2f\left( \frac{a+b}{2}%
\right) \right] -\frac{1}{b-a}\int_{a}^{b}f(x)dx\right\vert \leq \frac{1}{%
2880}\left\Vert f^{(4)}\right\Vert _{\infty }\left( b-a\right) ^{4}
\end{equation*}%
is well known in the literature as Simpson's inequality.

For some results about Simpson inequality see \cite{ADS}-\cite{U2}.

Recall the Definiton of Godunova-Levin function as following:

\begin{definition}
(See \cite{GO}, \cite[p.410]{MIT3}) We say that $f:I\rightarrow \mathbb{R}$\
is a Godunova-Levin function or that $f$\ belongs to the class $Q\left(
I\right) $ if $f$\ is non-negative and for all $x,y\in I$\ and $t\in \left(
0,1\right) $ we have 
\begin{equation*}
f\left( tx+\left( 1-t\right) y\right) \leq \frac{f\left( x\right) }{t}+\frac{%
f\left( y\right) }{1-t}.
\end{equation*}
\end{definition}

The main aim of this paper is to give some new inequalities of Simpson's
type for functions whose second derivatives of absolute values are
Godunova-Levin functions or belong to the class $Q\left( I\right) .$

\section{Main Results}

We used the following Lemma to obtain our main results.

\begin{lemma}
(See \cite{U2}) Let $f:I\subset 
\mathbb{R}
\rightarrow 
\mathbb{R}
$ be an absolutely continuous mapping on I$^{0}$ where $a,b\in I$ with $a<b,$
such that $f^{\prime \prime }\in L\left[ a,b\right] .$ Then the following
equality holds:%
\begin{eqnarray*}
&&\left\vert \frac{1}{b-a)}\int_{a}^{b}f(x)dx-\frac{1}{6}\left[
f(a)+4f\left( \frac{a+b}{2}\right) +f(b)\right] \right\vert \\
&=&(b-a)^{2}\int_{0}^{1}p(t)f^{\prime \prime }(tb+\left( 1-t\right) a)dt,
\end{eqnarray*}%
where%
\begin{equation*}
\ \ p(t)=\left\{ 
\begin{array}{c}
\frac{1}{6}t\left( 3t-1\right) ,\text{ \ \ \ \ }t\in \left[ 0,\frac{1}{2}%
\right) \\ 
\\ 
\frac{1}{6}\left( t-1\right) \left( 3t-2\right) ,\text{ \ \ \ \ }t\in \left[ 
\frac{1}{2},1\right]%
\end{array}%
\right. .
\end{equation*}
\end{lemma}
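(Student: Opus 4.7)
The plan is to read the assertion as a signed identity (the absolute-value bars on the left-hand side appear to be a typographical artefact, since the right-hand side is not an absolute value) and to derive it by performing two successive integrations by parts on the right-hand integral. Because the kernel $p(t)$ is defined piecewise, the first step will be to split
\begin{equation*}
\int_0^1 p(t) f''(tb+(1-t)a)\,dt = \int_0^{1/2} \tfrac{1}{6} t(3t-1)\, f''(\cdot)\,dt + \int_{1/2}^1 \tfrac{1}{6}(t-1)(3t-2)\, f''(\cdot)\,dt
\end{equation*}
and work on each piece separately, using the antiderivatives $\tfrac{1}{b-a} f'(tb+(1-t)a)$ in the first integration by parts and $\tfrac{1}{(b-a)^2} f(tb+(1-t)a)$ in the second.

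Before computing I will tabulate the boundary data:
\begin{equation*}
p(0)=p(1)=0,\qquad p(\tfrac{1}{2}^{-})=p(\tfrac{1}{2}^{+})=\tfrac{1}{24},
\end{equation*}
\begin{equation*}
p'(0)=-\tfrac{1}{6},\quad p'(\tfrac{1}{2}^{-})=\tfrac{1}{3},\quad p'(\tfrac{1}{2}^{+})=-\tfrac{1}{3},\quad p'(1)=\tfrac{1}{6},\quad p''\equiv 1\text{ on each piece.}
\end{equation*}
The continuity of $p$ at $1/2$ together with $p(0)=p(1)=0$ makes all $f'$-boundary terms from the first integration by parts vanish. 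The second integration by parts then leaves boundary terms proportional to $p'$ at $0$, $1/2^{\pm}$, and $1$: the endpoint values $p'(0)$ and $p'(1)$ produce the coefficients $-\tfrac{1}{6(b-a)^2}$ on $f(a)$ and $f(b)$, while the jump $p'(1/2^{+})-p'(1/2^{-})=-\tfrac{2}{3}$ at the midpoint yields a coefficient $-\tfrac{2}{3(b-a)^2}$ on $f\!\left(\tfrac{a+b}{2}\right)$. The interior remainder is $\tfrac{1}{(b-a)^2}\int_0^1 f(tb+(1-t)a)\,dt$, which the substitution $x=tb+(1-t)a$ converts into $\tfrac{1}{(b-a)^3}\int_a^b f(x)\,dx$.

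Multiplying through by $(b-a)^2$ and rearranging will produce exactly the Simpson identity, with the weight $4$ on the midpoint emerging naturally from $\tfrac{2}{3}=\tfrac{4}{6}$. The only real obstacle is sign-and-coefficient bookkeeping at $t=1/2$, where $p'$ is discontinuous: one must verify that the boundary contributions from the two pieces at the midpoint reinforce rather than cancel, which is precisely what makes the midpoint weight equal to $4$ in Simpson's rule. Once the boundary table above is in hand, the remainder of the argument is mechanical.
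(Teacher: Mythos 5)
Your proposal is correct, and there is nothing to compare it against in the paper itself: the lemma is stated without proof, imported from Ujevi\'{c} \cite{U2}, where the standard argument is precisely the double integration by parts you outline. Your boundary table checks out in full: writing $g(t)=f(tb+(1-t)a)$, the $g'$-terms cancel by continuity of $p$ at $t=\tfrac{1}{2}$, and two integrations by parts give $\int_{0}^{1}p(t)g''(t)\,dt=-\tfrac{1}{6}g(0)-\tfrac{2}{3}g\left(\tfrac{1}{2}\right)-\tfrac{1}{6}g(1)+\int_{0}^{1}g(t)\,dt$, which, since $g''(t)=(b-a)^{2}f''(tb+(1-t)a)$ and $\int_{0}^{1}g(t)\,dt=\tfrac{1}{b-a}\int_{a}^{b}f(x)\,dx$, is exactly the asserted identity --- as a signed equality, confirming your (correct) reading that the absolute-value bars on the left-hand side are a typographical artefact, like the stray parenthesis in $\frac{1}{b-a)}$. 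For a final write-up you should add one sentence of rigor: the hypotheses must be read as $f'$ being absolutely continuous with $f''\in L[a,b]$ (i.e., $f\in W^{2,1}[a,b]$), since that is what licenses both integrations by parts.
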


\begin{theorem}
Let $f$ be a functions which satisfy the assumptions of Lemma 1. If $%
\left\vert f^{\prime \prime }\right\vert $ belongs to the class $Q\left(
I\right) ,$ then the following inequality holds:%
\begin{eqnarray*}
&&\left\vert \frac{1}{b-a)}\int_{a}^{b}f(x)dx-\frac{1}{6}\left[
f(a)+4f\left( \frac{a+b}{2}\right) +f(b)\right] \right\vert \\
&\leq &\frac{(b-a)^{2}}{6}\left( \frac{12\ln 2-8\ln 3+1}{2}\right) \left[
\left\vert f^{\prime \prime }\left( a\right) \right\vert +\left\vert
f^{\prime \prime }\left( b\right) \right\vert \right] .
\end{eqnarray*}
\end{theorem}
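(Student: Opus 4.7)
The plan is to start from the identity in Lemma 1, move the absolute value inside the integral on the right-hand side, and then apply the Godunova--Levin inequality to the integrand. Taking $x=b$ and $y=a$ in the defining inequality of the class $Q(I)$ applied to $\left\vert f^{\prime \prime }\right\vert $ yields
\[
\left\vert f^{\prime \prime }(tb+(1-t)a)\right\vert \leq \frac{\left\vert f^{\prime \prime }(b)\right\vert }{t}+\frac{\left\vert f^{\prime \prime }(a)\right\vert }{1-t},
\]
so that the left-hand side of the theorem is dominated by
\[
(b-a)^{2}\left( \left\vert f^{\prime \prime }(b)\right\vert \int_{0}^{1}\frac{\left\vert p(t)\right\vert }{t}\,dt+\left\vert f^{\prime \prime }(a)\right\vert \int_{0}^{1}\frac{\left\vert p(t)\right\vert }{1-t}\,dt\right) .
\]

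Next I would exploit the symmetry $p(1-t)=p(t)$, which one checks by substituting $1-t$ into each branch of the piecewise definition of $p$ and observing that the two branches swap. This equates the two integrals above and collapses the bound to $K(b-a)^{2}[\left\vert f^{\prime \prime }(a)\right\vert +\left\vert f^{\prime \prime }(b)\right\vert ]$, where $K:=\int_{0}^{1}\left\vert p(t)\right\vert /t\,dt$.

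The remaining task is to compute $K$. I would partition $[0,1]$ according to the sign of $p$: $p$ is nonpositive on $[0,1/3]\cup \lbrack 2/3,1]$ and nonnegative on $[1/3,1/2]\cup \lbrack 1/2,2/3]$. On $[0,1/3]$ and $[1/3,1/2]$ the factor $t$ in $p(t)$ cancels with the $t$ in the denominator, so the integrand is a polynomial and the contribution is elementary; on $[1/2,2/3]$ and $[2/3,1]$ I would divide $(1-t)(3t-2)/t$ into the form $c_{1}/t+c_{2}+c_{3}t$, whose antiderivative produces the $\ln t$ terms evaluated at $1/2,\,2/3,\,1$. Collecting the four contributions should give $K=(12\ln 2-8\ln 3+1)/12$, which equals $\tfrac{1}{6}\cdot \tfrac{12\ln 2-8\ln 3+1}{2}$ and matches the coefficient stated in the theorem.

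The main obstacle is purely bookkeeping: keeping signs straight across the four subintervals and combining the several rational and logarithmic pieces without arithmetic slip, since a single misplaced sign changes the sign of a $\ln $ contribution and destroys the clean closed form. No analytic difficulty arises, because $p(t)=O(t)$ near $0$ and $p(t)=O(1-t)$ near $1$, so both integrands $\left\vert p(t)\right\vert /t$ and $\left\vert p(t)\right\vert /(1-t)$ are bounded on $[0,1]$ and the interchange of absolute value and integration, together with the pointwise use of the $Q(I)$ bound, is immediately justified.
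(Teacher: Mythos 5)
Your proposal is correct and follows essentially the same route as the paper: start from Lemma 1, pass the modulus inside, apply the Godunova--Levin bound $\left\vert f^{\prime\prime}(tb+(1-t)a)\right\vert \leq \frac{\left\vert f^{\prime\prime}(b)\right\vert}{t}+\frac{\left\vert f^{\prime\prime}(a)\right\vert}{1-t}$ pointwise, and evaluate $\left\vert p(t)\right\vert$ piecewise over $[0,\tfrac{1}{3}]$, $[\tfrac{1}{3},\tfrac{1}{2}]$, $[\tfrac{1}{2},\tfrac{2}{3}]$, $[\tfrac{2}{3},1]$; your symmetry observation $p(1-t)=p(t)$ is valid (I checked both branches) and merely halves the bookkeeping, since your constant $K=\int_{0}^{1}\left\vert p(t)\right\vert /t\,dt=\frac{12\ln 2-8\ln 3+1}{12}$ indeed equals the paper's coefficient $\frac{1}{6}\cdot\frac{12\ln 2-8\ln 3+1}{2}$, whereas the paper evaluates all eight integrals directly.
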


\begin{proof}
From Lemma 1 and using the properties of modulus, we have%
\begin{eqnarray*}
&&\left\vert \frac{1}{b-a)}\int_{a}^{b}f(x)dx-\frac{1}{6}\left[
f(a)+4f\left( \frac{a+b}{2}\right) +f(b)\right] \right\vert \\
&\leq &(b-a)^{2}\left\{ \int_{0}^{\frac{1}{2}}\left\vert \frac{1}{6}t\left(
3t-1\right) \right\vert \left\vert f^{\prime \prime }(tb+\left( 1-t\right)
a)\right\vert dt\right. \\
&&\left. +\int_{\frac{1}{2}}^{1}\left\vert \frac{1}{6}\left( t-1\right)
\left( 3t-2\right) \right\vert \left\vert f^{\prime \prime }(tb+\left(
1-t\right) a)\right\vert dt\right\} \\
&=&\frac{(b-a)^{2}}{6}\left\{ \int_{0}^{\frac{1}{3}}t\left( 1-3t\right)
\left\vert f^{\prime \prime }(tb+\left( 1-t\right) a)\right\vert dt+\int_{%
\frac{1}{3}}^{\frac{1}{2}}t\left( 3t-1\right) \left\vert f^{\prime \prime
}(tb+\left( 1-t\right) a)\right\vert dt\right. \\
&&\left. +\int_{\frac{1}{2}}^{\frac{2}{3}}\left( 1-t\right) \left(
2-3t\right) \left\vert f^{\prime \prime }(tb+\left( 1-t\right) a)\right\vert
dt+\int_{\frac{2}{3}}^{1}\left( 1-t\right) \left( 3t-2\right) \left\vert
f^{\prime \prime }(tb+\left( 1-t\right) a)\right\vert dt\right\} .
\end{eqnarray*}%
Since $\left\vert f^{\prime \prime }\right\vert $ belongs to the class $%
Q\left( I\right) ,$ we can write%
\begin{eqnarray*}
&&\left\vert \frac{1}{b-a)}\int_{a}^{b}f(x)dx-\frac{1}{6}\left[
f(a)+4f\left( \frac{a+b}{2}\right) +f(b)\right] \right\vert \\
&\leq &\frac{(b-a)^{2}}{6}\left\{ \int_{0}^{\frac{1}{3}}t\left( 1-3t\right)
\left( \frac{\left\vert f^{\prime \prime }\left( b\right) \right\vert }{t}+%
\frac{\left\vert f^{\prime \prime }\left( a\right) \right\vert }{1-t}\right)
dt+\int_{\frac{1}{3}}^{\frac{1}{2}}t\left( 3t-1\right) \left( \frac{%
\left\vert f^{\prime \prime }\left( b\right) \right\vert }{t}+\frac{%
\left\vert f^{\prime \prime }\left( a\right) \right\vert }{1-t}\right)
dt\right. \\
&&\left. +\int_{\frac{1}{2}}^{\frac{2}{3}}\left( 1-t\right) \left(
2-3t\right) \left( \frac{\left\vert f^{\prime \prime }\left( b\right)
\right\vert }{t}+\frac{\left\vert f^{\prime \prime }\left( a\right)
\right\vert }{1-t}\right) dt+\int_{\frac{2}{3}}^{1}\left( 1-t\right) \left(
3t-2\right) \left( \frac{\left\vert f^{\prime \prime }\left( b\right)
\right\vert }{t}+\frac{\left\vert f^{\prime \prime }\left( a\right)
\right\vert }{1-t}\right) dt\right\}
\end{eqnarray*}%
Computing the above integrals, we get te desired result.
\end{proof}

\begin{corollary}
In Theorem 1, if we take $f\left( a\right) =f\left( b\right) =f\left( \frac{%
a+b}{2}\right) ,$ then we have%
\begin{equation*}
\left\vert \frac{1}{b-a)}\int_{a}^{b}f(x)dx-f\left( \frac{a+b}{2}\right)
\right\vert \leq \frac{(b-a)^{2}}{6}\left( \frac{12\ln 2-8\ln 3+1}{2}\right) %
\left[ \left\vert f^{\prime \prime }\left( a\right) \right\vert +\left\vert
f^{\prime \prime }\left( b\right) \right\vert \right] .
\end{equation*}%
For $M>0,$ if $\left\vert f^{\prime \prime }\left( x\right) \right\vert <M,$
for all $x\in \left[ a,b\right] ,$ then we have%
\begin{equation*}
\left\vert \frac{1}{b-a)}\int_{a}^{b}f(x)dx-f\left( \frac{a+b}{2}\right)
\right\vert \leq \frac{(b-a)^{2}}{3}\left( \frac{12\ln 2-8\ln 3+1}{2}\right)
M.
\end{equation*}
\end{corollary}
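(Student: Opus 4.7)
The plan is to derive Corollary 1 as a direct specialization of Theorem 1, with essentially no new analytic work beyond algebraic bookkeeping. The first claim exploits the fact that under the hypothesis $f(a)=f(b)=f\!\left(\frac{a+b}{2}\right)$, the Simpson combination collapses:
\begin{equation*}
\frac{1}{6}\Big[f(a)+4f\!\left(\tfrac{a+b}{2}\right)+f(b)\Big]=\frac{1}{6}\cdot 6f\!\left(\tfrac{a+b}{2}\right)=f\!\left(\tfrac{a+b}{2}\right).
\end{equation*}
Therefore the left-hand side of the inequality in Theorem 1 becomes exactly $\left|\frac{1}{b-a}\int_a^b f(x)\,dx - f\!\left(\frac{a+b}{2}\right)\right|$, while the right-hand side is left untouched. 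Substituting this identification into Theorem 1 immediately yields the first inequality of the corollary.

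For the second inequality, I would keep the hypothesis $f(a)=f(b)=f\!\left(\frac{a+b}{2}\right)$ in force and additionally invoke the uniform bound $|f''(x)|<M$ on $[a,b]$. This bound applies in particular at the endpoints, so $|f''(a)|+|f''(b)|<2M$. Inserting this estimate into the inequality just obtained gives
\begin{equation*}
\frac{(b-a)^{2}}{6}\left(\frac{12\ln 2-8\ln 3+1}{2}\right)\bigl[|f''(a)|+|f''(b)|\bigr]<\frac{(b-a)^{2}}{3}\left(\frac{12\ln 2-8\ln 3+1}{2}\right)M,
\end{equation*}
which is precisely the second claim.

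There is no real obstacle here, since the corollary is a direct consequence of Theorem 1; the only items to verify are the collapse of the Simpson average under the equal-values hypothesis and the endpoint estimate $|f''(a)|+|f''(b)|<2M$. Both are elementary, so the corollary is best presented as a short two-line deduction rather than a new argument.
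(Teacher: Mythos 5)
Your proposal is correct and matches the paper's (implicit) derivation exactly: the hypothesis $f(a)=f(b)=f\left(\frac{a+b}{2}\right)$ collapses the Simpson average $\frac{1}{6}\left[f(a)+4f\left(\frac{a+b}{2}\right)+f(b)\right]$ to $f\left(\frac{a+b}{2}\right)$, and the bound $\left\vert f^{\prime\prime}(a)\right\vert+\left\vert f^{\prime\prime}(b)\right\vert<2M$ turns the factor $\frac{(b-a)^{2}}{6}\cdot 2M$ into $\frac{(b-a)^{2}}{3}M$. Nothing further is needed.
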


\begin{theorem}
Let $f$ be a functions which satisfy the assumptions of Lemma 1. If $%
\left\vert f^{\prime \prime }\right\vert ^{q}$ belongs to the class $Q\left(
I\right) ,$ then the following inequality holds:%
\begin{eqnarray*}
&&\left\vert \frac{1}{b-a)}\int_{a}^{b}f(x)dx-\frac{1}{6}\left[
f(a)+4f\left( \frac{a+b}{2}\right) +f(b)\right] \right\vert \\
&\leq &\frac{(b-a)^{2}}{6}\left( \frac{1}{27}\right) ^{1-\frac{1}{q}}\left\{ %
\left[ \left( 6\ln 2-4\ln 3+\frac{7}{24}\right) \left\vert f^{\prime \prime
}\left( a\right) \right\vert ^{q}+\frac{5\left\vert f^{\prime \prime }\left(
b\right) \right\vert ^{q}}{24}\right] ^{\frac{1}{q}}\right. \\
&&\left. +\left[ \left( 6\ln 2-4\ln 3+\frac{7}{24}\right) \left\vert
f^{\prime \prime }\left( b\right) \right\vert ^{q}+\frac{5\left\vert
f^{\prime \prime }\left( a\right) \right\vert ^{q}}{24}\right] ^{\frac{1}{q}%
}\right\} .
\end{eqnarray*}%
where $q\geq 1.$
\end{theorem}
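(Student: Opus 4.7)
The plan is to parallel the argument of Theorem 1 up to the application of the Q-class property, but to insert the power-mean inequality first so that the hypothesis on $|f''|^{q}$ (rather than on $|f''|$) can be exploited. I start from Lemma 1, take absolute values inside the integral, split at $t=1/2$, and pull out the common factor $1/6$ from $p(t)$. This bounds the left-hand side by
\[
\frac{(b-a)^{2}}{6}\left\{\int_{0}^{1/2}|t(3t-1)|\,|f''(tb+(1-t)a)|\,dt+\int_{1/2}^{1}|(t-1)(3t-2)|\,|f''(tb+(1-t)a)|\,dt\right\}.
\]

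To each of the two integrals I would apply the power-mean inequality
\[
\int|g(t)|\,|h(t)|\,dt\le\left(\int|g(t)|\,dt\right)^{1-1/q}\left(\int|g(t)|\,|h(t)|^{q}\,dt\right)^{1/q},
\]
with $g(t)=t(3t-1)$ or $(t-1)(3t-2)$ and $h(t)=f''(tb+(1-t)a)$. For the outer factor I split at the sign change $t=1/3$ (respectively $t=2/3$); a short calculation gives $\int_{0}^{1/2}|t(3t-1)|\,dt=\tfrac{1}{54}+\tfrac{1}{54}=\tfrac{1}{27}$, and the analogous integral on $[1/2,1]$ equals $\tfrac{1}{27}$ by the substitution $t\mapsto 1-t$. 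This produces the prefactor $(1/27)^{1-1/q}$ appearing in the statement. For the inner factor I use the Q-class bound $|f''(tb+(1-t)a)|^{q}\le|f''(b)|^{q}/t+|f''(a)|^{q}/(1-t)$, which, after cancelling the factor of $t$ where possible, reduces the first inner integral to
\[
|f''(b)|^{q}\int_{0}^{1/2}|3t-1|\,dt+|f''(a)|^{q}\int_{0}^{1/2}\frac{t\,|3t-1|}{1-t}\,dt.
\]

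The first of these integrals is elementary and equals $5/24$. The second, which is the computational heart of the argument, I handle by polynomial division: on $[0,1/3]$ one has $\frac{t(1-3t)}{1-t}=3t+2-\frac{2}{1-t}$ (with opposite sign on $[1/3,1/2]$); integrating across both subintervals and collecting the $\ln(1-t)$ contributions assembles the coefficient $6\ln2-4\ln3+\tfrac{7}{24}$. The corresponding computation on $[1/2,1]$ is obtained from the one on $[0,1/2]$ by the substitution $t\mapsto 1-t$, which interchanges the roles of $a$ and $b$ and produces the second bracket of the theorem. Putting these pieces back into the power-mean estimate yields the asserted inequality. The main obstacle is the careful bookkeeping across the four subintervals $[0,1/3]$, $[1/3,1/2]$, $[1/2,2/3]$, $[2/3,1]$ on which $p(t)$ changes sign, together with the logarithmic integration against $1/(1-t)$; no conceptual ingredient beyond the power-mean inequality and the Q-class definition is needed.
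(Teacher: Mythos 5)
Your proposal is correct and follows essentially the same route as the paper: Lemma 1, the power-mean inequality applied separately on $\left[0,\tfrac{1}{2}\right]$ and $\left[\tfrac{1}{2},1\right]$ with the factor $\left(\tfrac{1}{27}\right)^{1-\frac{1}{q}}$, then the $Q$-class bound on $\left\vert f^{\prime\prime}\right\vert^{q}$, and explicit evaluation of the resulting integrals to obtain $6\ln 2-4\ln 3+\tfrac{7}{24}$ and $\tfrac{5}{24}$. In fact you supply the integral computations the paper leaves implicit, and you correctly apply the $Q$-class hypothesis to $\left\vert f^{\prime\prime}\right\vert^{q}$ where the paper's text misstates it as $\left\vert f^{\prime\prime}\right\vert$.
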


\begin{proof}
From Lemma 1 and using the Power-mean inequality, we have 
\begin{eqnarray*}
&&\left\vert \frac{1}{b-a)}\int_{a}^{b}f(x)dx-\frac{1}{6}\left[
f(a)+4f\left( \frac{a+b}{2}\right) +f(b)\right] \right\vert \\
&\leq &\frac{(b-a)^{2}}{6}\left\{ \left( \int_{0}^{\frac{1}{2}}\left\vert
t\left( 3t-1\right) \right\vert dt\right) ^{1-\frac{1}{q}}\left( \int_{0}^{%
\frac{1}{2}}\left\vert t\left( 3t-1\right) \right\vert \left\vert f^{\prime
\prime }(tb+\left( 1-t\right) a)\right\vert ^{q}dt\right) ^{\frac{1}{q}%
}\right. \\
&&+\left. \left( \int_{\frac{1}{2}}^{1}\left\vert \left( t-1\right) \left(
3t-2\right) \right\vert dt\right) ^{1-\frac{1}{q}}\left( \int_{\frac{1}{2}%
}^{1}\left( t-1\right) \left( 3t-2\right) \left\vert f^{\prime \prime
}(tb+\left( 1-t\right) a)\right\vert ^{q}dt\right) ^{\frac{1}{q}}\right\}
\end{eqnarray*}%
Since $\left\vert f^{\prime \prime }\right\vert $ belongs to the class $%
Q\left( I\right) $ and by computing the above integrals, we deduce 
\begin{eqnarray*}
&&\left\vert \frac{1}{b-a)}\int_{a}^{b}f(x)dx-\frac{1}{6}\left[
f(a)+4f\left( \frac{a+b}{2}\right) +f(b)\right] \right\vert \\
&\leq &\frac{(b-a)^{2}}{6}\left( \frac{1}{27}\right) ^{1-\frac{1}{q}}\left\{
\left( \left( 6\ln 2-4\ln 3+\frac{7}{24}\right) \left\vert f^{\prime \prime
}\left( a\right) \right\vert ^{q}+\frac{5\left\vert f^{\prime \prime }\left(
b\right) \right\vert ^{q}}{24}\right) ^{\frac{1}{q}}\right. \\
&&\left. +\left( \left( 6\ln 2-4\ln 3+\frac{7}{24}\right) \left\vert
f^{\prime \prime }\left( b\right) \right\vert ^{q}+\frac{5\left\vert
f^{\prime \prime }\left( a\right) \right\vert ^{q}}{24}\right) ^{\frac{1}{q}%
}\right\}
\end{eqnarray*}%
which is the desired result.
\end{proof}

\section{Applications to Numerical Integration}

Let $d$ be a division of the interval $\left[ a,b\right] ,$ i.e., $%
d:a=x_{0}<x_{1}<...<x_{n-1}<x_{n}=b,$ $h_{i}=\frac{\left(
x_{i+1}-x_{i}\right) }{2}$ and consider the Simpson's formulae%
\begin{equation*}
S\left( f,d\right) =\dsum\limits_{i=0}^{n-1}\frac{f\left( x_{i}\right)
+f\left( x_{i}+h_{i}\right) +f\left( x_{i+1}\right) }{6}\left(
x_{i+1}-x_{i}\right) .
\end{equation*}%
It is well-known that if the mapping $f:\left[ a,b\right] \rightarrow 
\mathbb{R}
,$ is differentiable such that $f^{\left( 4\right) }\left( x\right) $ exists
on $\left( a,b\right) $ and $M=\max\limits_{x\in \left( a,b\right)
}\left\vert f^{\left( 4\right) }\left( x\right) \right\vert <\infty $, then 
\begin{equation*}
I=\dint\limits_{a}^{b}f\left( x\right) dx=S\left( f,d\right) +E_{s}\left(
f,d\right) ,
\end{equation*}%
where the approximation error $E_{s}\left( f,d\right) $ of the integral $I$
by the Simpson's formulae $S\left( f,d\right) $ satisfies 
\begin{equation*}
E_{s}\left( f,d\right) \leq \frac{M}{90}\dsum\limits_{i=0}^{n-1}\left(
x_{i+1}-x_{i}\right) ^{5}.
\end{equation*}%
Now we will give estimation for remainder term $E\left( f,d\right) $ in
terms of the second derivative.

\begin{proposition}
Under the conditions of Theorem 1, then for every division $d$ of $\left[ a,b%
\right] ,$ the following holds:%
\begin{equation*}
\left\vert E_{s}\left( f,d\right) \right\vert \leq \left( \frac{12\ln 2-8\ln
3+1}{12}\right) \dsum\limits_{i=0}^{n-1}\left( x_{i+1}-x_{i}\right) ^{2}%
\left[ \left\vert f^{\prime \prime }\left( x_{i}\right) \right\vert
+\left\vert f^{\prime \prime }\left( x_{i+1}\right) \right\vert \right] .
\end{equation*}
\end{proposition}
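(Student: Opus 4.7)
The plan is to apply Theorem~1 on each subinterval $[x_i,x_{i+1}]$ of the division $d$, rescale by the length of that subinterval, and then sum the resulting inequalities with the triangle inequality.

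First I would note that, since $h_i=(x_{i+1}-x_i)/2$, the point $x_i+h_i$ is exactly the midpoint of $[x_i,x_{i+1}]$, so Theorem~1 applied on that subinterval gives
\begin{equation*}
\left|\frac{1}{x_{i+1}-x_i}\int_{x_i}^{x_{i+1}}f(x)\,dx-\frac{1}{6}\bigl[f(x_i)+4f(x_i+h_i)+f(x_{i+1})\bigr]\right|\leq \frac{(x_{i+1}-x_i)^{2}(12\ln 2-8\ln 3+1)}{12}\bigl[|f''(x_i)|+|f''(x_{i+1})|\bigr].
\end{equation*}
Multiplying through by $x_{i+1}-x_i$ converts the left-hand side into the absolute value of $\int_{x_i}^{x_{i+1}}f(x)\,dx-\frac{x_{i+1}-x_i}{6}[f(x_i)+4f(x_i+h_i)+f(x_{i+1})]$. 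Summing over $i=0,\ldots,n-1$, the integrals combine by additivity to $I=\int_a^b f(x)\,dx$ and the quadrature contributions sum to $S(f,d)$, so the signed sum equals $E_s(f,d)=I-S(f,d)$. The triangle inequality then bounds $|E_s(f,d)|$ by the sum of the right-hand sides, with the constant $\frac{1}{6}\cdot\frac{12\ln 2-8\ln 3+1}{2}=\frac{12\ln 2-8\ln 3+1}{12}$ factoring out of the sum.

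There is no serious analytic difficulty here, since Theorem~1 does all the work; the only care needed is bookkeeping. In particular, the rescaling step produces $(x_{i+1}-x_i)^{3}$ on the right rather than $(x_{i+1}-x_i)^{2}$, so the exponent $2$ in the displayed statement appears to be a misprint for $3$ (the correct power both by dimensional analysis and by direct computation).
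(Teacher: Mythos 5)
Your argument is precisely the paper's own proof---the paper's justification is the one-line remark that one applies Theorem 1 on each subinterval $\left[ x_{i},x_{i+1}\right] $ and sums over $i$---except that you make the bookkeeping explicit: rescale by $x_{i+1}-x_{i}$, combine the integrals by additivity, and finish with the triangle inequality, with the constant $\frac{1}{6}\cdot \frac{12\ln 2-8\ln 3+1}{2}=\frac{12\ln 2-8\ln 3+1}{12}$ as stated. You are also right to flag the exponent: multiplying the Theorem 1 bound $\frac{(x_{i+1}-x_{i})^{2}}{6}\left( \frac{12\ln 2-8\ln 3+1}{2}\right) \left[ \left\vert f^{\prime \prime }\left( x_{i}\right) \right\vert +\left\vert f^{\prime \prime }\left( x_{i+1}\right) \right\vert \right] $ by the length $x_{i+1}-x_{i}$ yields $\left( x_{i+1}-x_{i}\right) ^{3}$, so the power $2$ in the printed Proposition is indeed a misprint for $3$ (as dimensional comparison with the classical fourth-derivative bound $\frac{M}{90}\sum \left( x_{i+1}-x_{i}\right) ^{5}$ also suggests), and your proof implicitly corrects a second misprint as well, since the paper's displayed $S\left( f,d\right) $ omits the weight $4$ on the midpoint term $f\left( x_{i}+h_{i}\right) $ that is needed for the per-interval quadratures to sum to Simpson's rule.
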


\begin{proof}
Applying Theorem 1 on the subintervals $\left[ x_{i},x_{i+1}\right] ,$ $%
\left( i=0,1,...,n-1\right) $ of the division $d$ and summing over $i$ from $%
0$ to $n-1$, we get the required result.
\end{proof}

\end{document}